\newtheorem{case 1}{case}
\newtheorem{thm}{Theorem}[section]
\newtheorem{lemma}[thm]{Lemma}
\newtheorem{theorem}[thm]{Theorem}
\theoremstyle{definition}
\newtheorem{definition}[thm]{Definition}
\theoremstyle{remark}
\newtheorem{remark}[thm]{Remark}
\newtheorem{example}{Example}
\numberwithin{equation}{section}
\def\theequation{\@arabic{\c@section}.\@arabic{\c@equation}}
\journal{Fixed point theory
}
\begin{document}
	\begin{frontmatter}
		\title{
			Redefining fractals through Suzuki Contraction.
		}
		\author[label1]{Kanase Pankaj Popatrao}
		\cortext[cor1]{Email address:pankajkanaseacademic@gmail.com}
		\address[label1]{Department of Mathematics, SRM Institute of Science and Technology,Kattakulathur Chennai 603203, India.}
		\begin{abstract}
			There has been a significant effort in recent years to generalize the traditional concept of iterated function systems (IFS).In this article, we proposed Suzuki contraction in hyperspace and finding out the fixed point for Hutchinson mapping, which is called a deterministic fractal. The deterministic fractal for such a Suzuki contraction mapping is shown to exist and to be unique. We propose the Suzuki IFS (SIFS) in the literature for fractal creation based on this conclusion.
		\end{abstract}
		\begin{keyword}
			Fixed point \sep iterated function system \sep Suzuki contraction\sep Suzuki iterated function system\sep Attractor (Deterministic Fractal). 
			\MSC 28A80 \sep 47H10 \sep 54H25
		\end{keyword}
	\end{frontmatter}
	\section{Introduction and preliminaries}
	\subsection{Introduction}
	Fractal theory, pioneered by Benoit Mandelbrot in $1975$, focuses on identifying patterns in nature's intricate, self-similar, and irregular shapes. The concept of self-similarity is fundamental to fractals, where patterns repeat themselves at different scales. Michael Barnsley's work on iterated function systems (IFS) in $1981$ provided a mathematical framework for generating self-similar fractals. Hutchinson's contribution was crucial in developing the mathematical techniques for constructing these fractals using an iterated function system. Fixed point theory, a branch of mathematics, plays a significant role in understanding and analyzing iterated function systems. It helps in determining the stability and behavior of fractal sets generated by IFS. The IFS (Iterated Function Systems) theory framework pioneered by Hutchinson has seen significant expansion to encompass broader spaces and generalized contractions. Hata extended the framework by incorporating condition /phi functions. Fernau introduced the concept of limitless IFSs, while Gwońzd´z-Lukowska and Jachymski, Mauldin and Urba´nski, Klimek and Kosek, Le´sniak, and Secelean have all contributed noteworthy works in this field. Secelean specifically delved into the study of countable iterated function systems on a compact metric space. These contributions collectively enrich and advance the understanding of iterated function systems and their applications.
	Iterated function systems have found applications in various fields, including Stochastic growth models, Image Compression,
	Signal and Image Processing, Terrain Generation, Pattern Recognition, Data Compression, Chaos Theory and Dynamical Systems, Financial Markets, approximation theory, study of bio-electric recordings. They provide a powerful tool for generating and analyzing complex structures that exhibit fractal properties. For further details on the applications and implications of iterated function systems and fractal geometry in applied sciences, one can refer to relevant literature and research papers \\
	In $1922$, Banach introduced a novel concept in his doctoral thesis concerning fixed point theory. He proposed that every Banach contraction on a complete metric space possesses a unique fixed point, often referred to as the Banach Contraction Theorem (BCT) or Banach Fixed Point Theorem (BCT). Since then, Some of the earliest notable generalizations of the BCT can be found in.\\ 
	In $2008$ 
	In comparison to traditional Iterated Function Systems (IFS), SIFS variation offers several distinct advantages, including increased flexibility, enhanced scalability, and improved fidelity. By incorporating elements of randomness and nonlinearity, Suzuki's IFS enables the generation of fractals with greater diversity and complexity, pushing the boundaries of mathematical creativity and exploration. SIFS inspires creativity, fosters innovation, and unlocks new possibilities for artistic expression and scientific inquiry.
	\section{\textbf{Main results}}
	
	\begin{definition}\label{D1}\cite{Suzuki 2008}
		Let  $(S,d)$  be a complete metric space and let $T$ be a self-mapping on $S$.
		Define a non-increasing mapping  $Q:\big[0,1\big)\to\big(Q(m),1\big]$ by
		\begin{equation*}
			Q(m)= \left\{
			\begin{array}{ll}
				1~~&if\,\qquad\quad~~~ 0\leq m \leq \frac{(\sqrt{5}-1)}{2}, \\
				\frac{(1-m)}{m^2}&if \,\quad~~~\frac{(\sqrt{5}-1)}{2}\leq m \leq \frac{1}{\sqrt2}, \\
				\frac{1}{(1+m)}&if \,\qquad\quad ~\frac{1}{\sqrt2}\leq m<1.\\
			\end{array}\right
			.\end{equation*}
		If there exists $ m \in \big[0,1\big)$ such that 
		\begin{equation}\label{Eq1}
			Q(m)~d(a,Ta)\leq d(a,b)~\implies~ d(Ta,Tb)\leq m ~d(a,b)~\forall~a,b \in S,
		\end{equation} then $T$ is said to be a Suzuki contraction mapping with contractivity factor $m$.
	\end{definition}
	\begin{remark} \label{R1}
		Every Banach contraction is a Suzuki contraction but every Suzuki contraction need not be a Banach contraction.  
	\end{remark}
	\begin{example}\label{E1}
		Let $S=\{(0,0),(4,0),(0,4),(4,5),(5,4)\}$ is complete metric space with the metric $d((x_1,x_2),(y_1,y_2))=\lvert x_1-y_1\rvert+\lvert x_2-y_2\rvert$ and let mapping $T$ be defined by 
		\begin{equation}
			T(x_1,x_2)=\left\{
			\begin{array}{ll}
				(x_1,0)& x_1\leq x_2, \\
				(0,x_2)&x_2<x_1.\\
			\end{array}\right.
		\end{equation} Then $T$ is a Suzuki contraction but not a Banach contraction because $T$ satisfies the condition $(\ref{Eq1})$ but not Banach condition.
		\begin{proof}
			suppose $x=(5,4), y=(4,5)$
			\begin{equation*}
				\begin{split}
					&Q(m) d(x,Tx)\leq d(x,y) \implies d(Tx,Ty)\leq md(x,y)\\&
					Q(m)d((5,4),T(5,4))\leq d((5,4),(4,5))\implies d(T(5,4),T(4,5))\leq md((5,4),(4,5))\\&Q(m)d((5,4),(0,4))\leq d((5,4),(4,5))\implies d((0,4),(4,0))\leq md((5,4),(4,5))\\& Q(m)\lvert 5-0\rvert+\lvert4-4\rvert \leq \lvert 5-4\rvert+\lvert4-5\rvert \implies\lvert0-4\rvert+\lvert4-0\rvert\leq m \lvert 5-4\rvert+\lvert4-5\rvert\\& Q(m)5\leq2 \implies 8\leq m2
				\end{split}
			\end{equation*}there doesnot exist $m\in\big[0,1\big)$ so that $T$ is not Banach at  $x=(5,4), y=(4,5)$ and for $x=(4,5), y=(5,4)$.
		\end{proof}
		\textbf{the following example shows why we said Suzuki working on reducing domain comparatively Banach.}
		\begin{example}Let $S=\big[0,1\big]$ be metric space with usual metric $d$. define mapping $T$ on $S$ by 
			\begin{equation}
				T(a)=\frac{a}{2}
			\end{equation} 
			above mapping is Banach contraction as well as Suzuki contraction.
		\end{example}
		\begin{proof}
			\begin{equation*}
				\begin{split}
					&Q(m)d(a,Ta)\leq d(a,b)\implies d(Ta,Tb)\leq md(a,b)\\&Q(m)\lvert a-\frac{a}{2}\rvert\leq \lvert a-b\rvert \implies \lvert \frac{a}{2}-\frac{b}{2}\rvert\leq m \lvert a-b\rvert\\& Q(m)\lvert\frac{a}{2}\lvert\leq \lvert a-b\rvert \implies Q(m) \lvert a-b\rvert\leq m \lvert a-b\rvert
				\end{split}
			\end{equation*}
			Then $T$ is suzuki contraction with contractivity factor $m\in \big[Q(m),1\big)$.
		\end{proof}
		\begin{remark}
			from the above example, we concluded that Suzuki contracton holds where this inequality satisfies  $ Q(m)\lvert\frac{a}{2}\lvert\leq \lvert a-b\rvert$ inequality in domain. no need to find all elements in the domain. then also having a unique fixed point for that Suzuki contraction. 
		\end{remark}
	\end{example} 
	
	\begin{remark}\label{R2}\cite{Pant 2022} Every Suzuki contraction need not be continuous.
	\end{remark}
	The BCP was forcefully generalized by Suzuki as follows:
	\begin{theorem}\cite{Suzuki 2008}\label{th1}
		Let $T$ be a Suzuki contraction mapping on the complete metric space $(S,d)$
		then $T$ has a unique fixed point $\bar{a}$. Moreover, $\underset{t\to\infty}\lim T^{\circ t}(a)=\bar{a}~ \forall~ a\in S $. 
	\end{theorem}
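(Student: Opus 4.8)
The plan is to follow the classical scheme for contraction-type fixed point theorems: build the Picard orbit, show it is Cauchy, pass to a limit by completeness, identify the limit as a fixed point, and prove uniqueness. The only genuinely delicate point is verifying that the limit is actually fixed, and this is where the three-piece definition of $Q$ does its work.

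First I would fix $a\in S$ and set $a_t=T^{\circ t}(a)$. Since each branch of $Q$ takes values in $(0,1]$, we have $Q(m)\le 1$ for every $m\in[0,1)$, so $Q(m)\,d(a_t,Ta_t)\le d(a_t,Ta_t)$; taking $b=Ta_t$ in the implication $(\ref{Eq1})$ then yields $d(Ta_t,T(Ta_t))=d(a_{t+1},a_{t+2})\le m\,d(a_t,a_{t+1})$. Iterating gives $d(a_t,a_{t+1})\le m^t d(a_0,a_1)$, and a geometric-series estimate shows $\{a_t\}$ is Cauchy. Completeness then furnishes a limit $\bar a$ with $a_t\to\bar a$. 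Next I would establish the auxiliary inequality $d(\bar a,Tx)\le m\,d(\bar a,x)$ for every $x\neq\bar a$: since $d(a_t,Ta_t)=d(a_t,a_{t+1})\to 0$ while $d(a_t,x)\to d(\bar a,x)>0$, the premise $Q(m)\,d(a_t,Ta_t)\le d(a_t,x)$ holds for all large $t$, so $(\ref{Eq1})$ gives $d(a_{t+1},Tx)\le m\,d(a_t,x)$, and letting $t\to\infty$ proves the claim. Specializing to $x=a_t$ shows in particular that $d(a_{t+1},\bar a)\le m\,d(a_t,\bar a)$.

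The crux is showing $T\bar a=\bar a$. The idea is to verify the premise $Q(m)\,d(a_t,Ta_t)\le d(a_t,\bar a)$ for large $t$, so that $(\ref{Eq1})$ with $b=\bar a$ gives $d(a_{t+1},T\bar a)\le m\,d(a_t,\bar a)\to 0$, forcing $T\bar a=\bar a$ by uniqueness of limits. From the previous step $d(a_{t+1},\bar a)\le m\,d(a_t,\bar a)$, hence $d(a_t,a_{t+1})\le(1+m)\,d(a_t,\bar a)$; when $m\in[\tfrac{1}{\sqrt2},1)$ we have $Q(m)=\tfrac{1}{1+m}$ and the premise follows immediately. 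For the remaining ranges of $m$ the crude factor $\tfrac{1}{1+m}$ is too weak, and one must sharpen the comparison between $d(a_t,a_{t+1})$ and $d(a_t,\bar a)$ --- using a two-step estimate routed through $a_{t+2}$, or the quadratic quantity $\tfrac{1-m}{m^2}$ --- exactly as the branches of $Q$ dictate. This case analysis, matched piece-by-piece to the definition of $Q$, is the step I expect to be the main obstacle.

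Finally, uniqueness is routine: if $\bar a$ and $\bar b$ are both fixed, then $Q(m)\,d(\bar a,T\bar a)=0\le d(\bar a,\bar b)$, so $(\ref{Eq1})$ gives $d(\bar a,\bar b)=d(T\bar a,T\bar b)\le m\,d(\bar a,\bar b)$, whence $d(\bar a,\bar b)=0$ since $m<1$. Because nothing in the orbit argument depended on the choice of starting point $a$, the convergence $T^{\circ t}(a)\to\bar a$ holds for every $a\in S$, which completes the proof.
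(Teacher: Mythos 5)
The paper itself offers no proof of this theorem --- it is quoted verbatim from Suzuki (2008) --- so your attempt can only be measured against Suzuki's original argument, whose skeleton you have reproduced correctly: the Picard orbit is Cauchy because $Q(m)\le 1$ always licenses the implication with $b=Ta_t$; the auxiliary inequality $d(\bar a,Tx)\le m\,d(\bar a,x)$ for $x\ne\bar a$ is obtained exactly as you describe; uniqueness is the routine computation you give; and your treatment of the fixed-point step for $m\in[\tfrac{1}{\sqrt2},1)$ is sound.

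The gap is the part you flag yourself, and it is worse than an omitted computation: for $0\le m<\tfrac{1}{\sqrt2}$ the strategy of verifying the premise $Q(m)\,d(a_t,Ta_t)\le d(a_t,\bar a)$ along the original orbit cannot be repaired by ``sharpening the comparison'' between $d(a_t,a_{t+1})$ and $d(a_t,\bar a)$. Take $S=\mathbb{R}$ with $Tx=-mx$: then $\bar a=0$ and $d(a_t,Ta_t)=(1+m)\,d(a_t,\bar a)$ for \emph{every} $t$, so when $Q(m)=1$ the premise fails at every step of the orbit; the constant $1+m$ in your estimate $d(a_t,a_{t+1})\le(1+m)\,d(a_t,\bar a)$ is attained and cannot be improved. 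Suzuki's proof of $T\bar a=\bar a$ in the two remaining ranges is therefore structurally different: one assumes $T\bar a\ne\bar a$, works with the orbit $T^{\circ t}\bar a$ of the limit point itself (showing $T^{\circ t}\bar a\ne\bar a$ and $d(T^{\circ(t+1)}\bar a,\bar a)\le m^{t}\,d(T\bar a,\bar a)$ via the auxiliary inequality), and then applies the contraction condition to carefully chosen pairs such as $(T^{\circ t}\bar a,\bar a)$, using the arithmetic facts $m^{2}\le 1-m$ in the range where $Q(m)=1$ and the value $Q(m)=\tfrac{1-m}{m^{2}}$ in the middle range to validate the premise and force the contradiction $d(\bar a,T\bar a)<d(\bar a,T\bar a)$. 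Without that case analysis the theorem is only established for $m\ge\tfrac{1}{\sqrt2}$.
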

		%
		\begin{remark}\cite{Suzuki 2008} Suzuki contraction is a significant mapping because this gives us metric completeness, i.e., $(S,d)$ is a complete metric space if and only if every Suzuki contraction mapping $T$ on  $(S,d)$ has a fixed point.
		\end{remark}  
		\begin{definition} \cite{Barnsley 2014}\label{D2}
			Let $\mathbb{C}(S)$ be a set of all non-void compact subsets of set $S$. The
			distance between any two compact subsets $A,B\in \mathbb{C}(S)$ is defined as:
			$D(A,B)=\sup\{d(a,B):a\in A \},$ where $d(a,B)= \inf\{d(a,b):a\in A,b\in B \}$. Then Hausdorff metric is defined as:
			\[
			h(A,B)= D(A,B) \vee D(B,A).
			\] Here, $\vee$ denotes maximum. The pair $(\mathbb{C}(S), h)$ is a Hausdorff-induced metric space of the underlying metric space (S,d). Hausdorff metric space is known as the `space of fractals '\cite {Barnsley 2014}. 
		\end{definition}
		\begin{lemma} \label{L1}
			There exists a mapping $T:\mathbb{C}(S)\to \mathbb{C}(S)$ defined by \begin{equation}\label{eq2}
				T(A)=\{T(a)|a\in A \}~\forall~ A\in \mathbb{C}(S),
			\end{equation}whenever $T:S\to S$ is a continuous Suzuki contraction mapping on the metric space $(S ,d)$.
		\end{lemma}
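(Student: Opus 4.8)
The plan is to verify that the set-assignment $A \mapsto \{T(a) : a \in A\}$ genuinely lands inside $\mathbb{C}(S)$, i.e.\ that $T(A)$ is a non-void compact subset of $S$ for every non-void compact $A$. Since $\mathbb{C}(S)$ consists precisely of the non-void compact subsets of $S$, establishing these two properties is exactly what the phrase ``there exists a mapping $T:\mathbb{C}(S)\to\mathbb{C}(S)$'' demands. Non-emptiness is immediate: if $A \neq \emptyset$, choose any $a \in A$, and then $T(a) \in T(A)$, so $T(A) \neq \emptyset$.

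The substance lies in compactness, and the only hypothesis I will actually invoke is the continuity of $T$ (the Suzuki-contraction structure itself is not needed for this step; it appears in the statement only because continuity is being assumed alongside it). Working in a metric space, I would use the sequential characterization of compactness. Fix $A \in \mathbb{C}(S)$ and let $(b_n)$ be an arbitrary sequence in $T(A)$. Write $b_n = T(a_n)$ with $a_n \in A$. Because $A$ is compact, $(a_n)$ admits a subsequence $(a_{n_k})$ converging to some $a \in A$. Continuity of $T$ then yields $b_{n_k} = T(a_{n_k}) \to T(a)$, and $T(a) \in T(A)$ since $a \in A$. Hence every sequence in $T(A)$ has a subsequence converging to a point of $T(A)$, so $T(A)$ is sequentially compact, and therefore compact.

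Combining the two steps, $T(A) \in \mathbb{C}(S)$ for each $A \in \mathbb{C}(S)$, so the rule $A \mapsto \{T(a): a\in A\}$ defines a bona fide self-map of $\mathbb{C}(S)$, which is the assertion of the lemma.

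I do not anticipate a genuine obstacle here: the result is the classical fact that continuous images of compact sets are compact, specialized to the hyperspace $(\mathbb{C}(S),h)$. The one point deserving care is certifying that the limit $T(a)$ lies in $T(A)$ itself rather than merely in the closure $\overline{T(A)}$; this is guaranteed precisely because the subsequential limit $a$ remains in $A$ by compactness of $A$. If one prefers to avoid sequences, the identical conclusion follows from the open-cover definition: pull an arbitrary open cover of $T(A)$ back through the continuous map $T$ to obtain an open cover of $A$, extract a finite subcover by compactness of $A$, and push it forward to a finite subcover of $T(A)$.
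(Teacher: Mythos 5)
Your proposal is correct and follows the same route as the paper: the paper's one-line proof simply invokes the fact that a continuous map sends compact sets to compact sets, which is exactly the fact you establish (via sequential compactness), together with the easy non-emptiness observation. You merely supply the details the paper leaves implicit, including the correct remark that only continuity, not the Suzuki condition, is used.
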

		\begin{proof}
			Whenever mapping $T$ is continuous then $T$ maps compact set to compact set so that $T$ is a well-defined mapping on $\mathbb{C}(S)$.
		\end{proof}
		\begin{definition} \label{D3}Let $(S,d)$ be complete metric space and Let $T_i$ be  Suzuki contraction mapping on  $(S,d)$ with respect to contractivity factor $m_i$ for $i=1,2,...,n$ where, $0\le m_i<1$. Then finite collection $\mathbb{I}=\{S; T_i, i\in\mathbb{N}_n\}$ is said to be a Suzuki iterated function system (SIFS).
		\end{definition}
		\begin{lemma}\cite{Barnsley 2014} \label{L2}
			If $\{A_i\}_{i=1}^{n}$ , $\{B_i\}_{i=1}^{n}$ are two finite collections of elements in $(\mathbb{C}(S))$ then
			\begin{equation*}
				h({\cup_{i=1}^{n} A_i},{\cup_{i=1}^{n}B_i})\leq\max h(A_i,B_i)
			\end{equation*}
		\end{lemma}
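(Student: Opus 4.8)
The plan is to reduce this symmetric Hausdorff estimate to a one-sided estimate for the directed distance $D$, establish that estimate from the elementary monotonicity of the point-to-set distance under set inclusion, and then recombine the two directions using the max-of-maxima identity. Throughout I would keep in mind that each $A_i,B_i\in\mathbb{C}(S)$ is non-void and compact, so every infimum and supremum appearing below is taken over a nonempty set and the Hausdorff metric is finite and well-defined.

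First I would prove the directed inequality
\[
D\!\left(\bigcup_{i=1}^{n}A_i,\ \bigcup_{i=1}^{n}B_i\right)\le\max_{1\le i\le n}D(A_i,B_i).
\]
Fix any point $a\in\bigcup_{i}A_i$; then $a\in A_k$ for some index $k$. The crux observation is that enlarging the target set can only shrink the point-to-set distance: since $B_k\subseteq\bigcup_{i}B_i$, the infimum defining $d\big(a,\bigcup_{i}B_i\big)$ ranges over a superset of the one defining $d(a,B_k)$, whence $d\big(a,\bigcup_{i}B_i\big)\le d(a,B_k)$. Combining this with $d(a,B_k)\le D(A_k,B_k)\le\max_{i}D(A_i,B_i)$ gives $d\big(a,\bigcup_{i}B_i\big)\le\max_{i}D(A_i,B_i)$ for every such $a$, and taking the supremum over $a\in\bigcup_{i}A_i$ yields the displayed directed bound.

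Next I would invoke symmetry: repeating the same argument with the roles of the $A_i$'s and $B_i$'s interchanged gives
\[
D\!\left(\bigcup_{i=1}^{n}B_i,\ \bigcup_{i=1}^{n}A_i\right)\le\max_{1\le i\le n}D(B_i,A_i).
\]
Finally I would assemble the two bounds through the definition $h=D(\cdot,\cdot)\vee D(\cdot,\cdot)$ together with the elementary identity $\big(\max_{i}x_i\big)\vee\big(\max_{i}y_i\big)=\max_{i}(x_i\vee y_i)$, concluding
\[
h\!\left(\bigcup_{i=1}^{n}A_i,\ \bigcup_{i=1}^{n}B_i\right)\le\Big(\max_{i}D(A_i,B_i)\Big)\vee\Big(\max_{i}D(B_i,A_i)\Big)=\max_{i}\big(D(A_i,B_i)\vee D(B_i,A_i)\big)=\max_{i}h(A_i,B_i).
\]

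There is no genuinely hard step here; the whole argument rests on the single monotonicity fact that the distance from a fixed point to a union is at most its distance to any member of the union. The only things to keep honest are that each $A_i,B_i$ is non-void and compact (so the distances are well-behaved) and that the max-of-maxima rearrangement is applied only at the level of the finite index set $\{1,\dots,n\}$. If anything were to cause trouble it would be carelessness about which set an infimum ranges over, so I would state the inclusion $B_k\subseteq\bigcup_{i}B_i$ explicitly before passing to infima.
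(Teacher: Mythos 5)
Your proof is correct. Note that the paper itself gives no proof of this lemma --- it is stated with a citation to Barnsley --- so there is no in-paper argument to compare against; your write-up supplies the standard one. The only stylistic difference from the usual textbook treatment is that Barnsley proves the two-set case $h(A\cup B, C\cup D)\le h(A,C)\vee h(B,D)$ and extends to $n$ sets by induction, whereas you handle general $n$ directly; your route is, if anything, cleaner, since the single monotonicity fact $d\bigl(a,\bigcup_i B_i\bigr)\le d(a,B_k)$ together with the identity $\bigl(\max_i x_i\bigr)\vee\bigl(\max_i y_i\bigr)=\max_i(x_i\vee y_i)$ does all the work in one pass. One further remark: your argument never actually uses compactness, only that the sets are non-void (and bounded enough for the suprema to be finite), which shows the lemma holds more generally than stated.
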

		
		\begin{lemma} \label{L3}
			Let $T$ be a self-mapping on the metric space $(S,d)$. If $T$ is a continuous Suzuki contraction on $(S,d)$ then $T$ is also a continuous Suzuki contraction on $(\mathbb{C}(S),h)$.
		\end{lemma}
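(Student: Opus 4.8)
The plan is to verify the three requirements separately: that the induced map $T:\mathbb{C}(S)\to\mathbb{C}(S)$ of \eqref{eq2} is well defined, that it is continuous with respect to the Hausdorff metric $h$, and that it satisfies the Suzuki contraction condition \eqref{Eq1} on $(\mathbb{C}(S),h)$ with the same contractivity factor $m$. Well-definedness is immediate from Lemma \ref{L1}, since a continuous $T$ carries each nonvoid compact set to a nonvoid compact set. For continuity on $(\mathbb{C}(S),h)$ I would not try to deduce it from the contraction inequality (a Suzuki contraction need not be continuous, by Remark \ref{R2}); instead I would use the hypothesis that $T$ is continuous on $(S,d)$ directly. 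Given a convergent sequence $A_n\to A$ in $(\mathbb{C}(S),h)$, all the $A_n$ together with $A$ lie in a common compact set, on which $T$ is uniformly continuous, and uniform continuity transfers convergence in $h$ from $A_n$ to $TA_n$.

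The heart of the lemma is the Suzuki inequality. Fix $A,B\in\mathbb{C}(S)$ and suppose $Q(m)\,h(A,TA)\le h(A,B)$; the goal is $h(TA,TB)\le m\,h(A,B)$. Since $h(TA,TB)=D(TA,TB)\vee D(TB,TA)$, it suffices to bound $D(TA,TB)$ and, symmetrically, $D(TB,TA)$ by $m\,h(A,B)$. To estimate $D(TA,TB)=\sup_{a\in A}\inf_{b\in B}d(Ta,Tb)$, I would exploit compactness: the outer supremum is attained at some $a_0\in A$, and for that $a_0$ the distance $d(a_0,B)$ is realized by some $b_0\in B$, so that $d(a_0,b_0)=d(a_0,B)\le D(A,B)\le h(A,B)$. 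If the pointwise Suzuki premise $Q(m)\,d(a_0,Ta_0)\le d(a_0,b_0)$ holds, then \eqref{Eq1} gives $d(Ta_0,Tb_0)\le m\,d(a_0,b_0)\le m\,h(A,B)$, whence $D(TA,TB)=\inf_{b\in B}d(Ta_0,Tb)\le d(Ta_0,Tb_0)\le m\,h(A,B)$; the argument for $D(TB,TA)$ is identical with the roles of $A$ and $B$ exchanged, and taking the maximum yields the claim.

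The step I expect to be the main obstacle is precisely the verification of the pointwise premise $Q(m)\,d(a_0,Ta_0)\le d(a_0,b_0)$ from the set-level hypothesis $Q(m)\,h(A,TA)\le h(A,B)$. This transfer is \emph{not} automatic: the Hausdorff hypothesis only controls $d(a_0,TA)=\inf_{a'\in A}d(a_0,Ta')$, whereas \eqref{Eq1} requires control of the genuinely larger quantity $d(a_0,Ta_0)$, and in general $d(a_0,Ta_0)$ need not be bounded by $h(A,TA)$ at all (for instance a map permuting the points of a finite set $A$ has $h(A,TA)=0$ while $d(a_0,Ta_0)>0$). Consequently the naive imitation of the Banach-contraction lifting proof breaks down, and I would handle it by a case split: when the premise holds the bound above applies, and in the complementary case $Q(m)\,d(a_0,Ta_0)>d(a_0,b_0)$ I would have to re-select the comparison point in $B$ or invoke the full strength of the Suzuki property of $T$ on $(S,d)$ — using that such a map has a unique fixed point with convergent iterates (Theorem \ref{th1}) — either to rule the bad configuration out or to recover the estimate $d(Ta_0,Tb)\le m\,h(A,B)$ for a suitable $b\in B$. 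Closing this case cleanly, rather than the routine supremum/infimum manipulations, is where the real work of the proof lies.
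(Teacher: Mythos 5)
You have correctly located the crux of the lemma --- the set-level premise $Q(m)\,h(A,TA)\le h(A,B)$ does not yield the pointwise premise $Q(m)\,d(a_0,Ta_0)\le d(a_0,b_0)$ needed to invoke \eqref{Eq1} --- but your proposal stops exactly there. The closing move you offer (``re-select the comparison point in $B$ or invoke Theorem \ref{th1} to rule the bad configuration out'') is a list of things one might try, not an argument, and the obstruction is quantitative on both sides of the required inequality: $h(A,TA)$ only dominates $\inf_{z\in A}d(a_0,Tz)$, which can be far smaller than $d(a_0,Ta_0)$, and $h(A,B)$ can be far larger than $d(a_0,b_0)=d(a_0,B)$. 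The first discrepancy already occurs under the lemma's hypotheses: take $T(z)=-\rho z$ on the plane (a Banach, hence Suzuki, contraction) and $A$ the unit circle, so that $h(A,TA)=1-\rho$ while $d(a,Ta)=1+\rho$ for every $a\in A$. This does not refute the lemma (that $T$ is Banach, so the lift is unconditional), but it shows the pointwise transfer you need is genuinely false in general, and Theorem \ref{th1} (existence of a fixed point and convergence of iterates) gives no evident leverage for recovering the estimate when the premise of \eqref{Eq1} fails at the selected pair $(a_0,b_0)$. Until that case is actually closed, the implication $Q(m)\,h(A,TA)\le h(A,B)\Rightarrow h(TA,TB)\le m\,h(A,B)$ is not proved.

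For comparison, the paper's own proof founders on the same rock: it writes $Q(m)\,\inf_{z\in A}d(a,Tz)\le Q(m)\,d(a,Ta)\le d(a,b)$ and justifies the second inequality ``by the hypothesis,'' which is precisely the unjustified pointwise transfer you warn against; it then finishes with a case analysis that introduces an unexplained constant $k$ satisfying $h(TA,TB)\le km\,h(A,B)$, i.e.\ it assumes a rescaled form of the conclusion. Your diagnosis of where the difficulty lies is therefore more honest than the printed treatment, but neither your proposal nor the paper's proof establishes the lemma; a correct proof would need either an additional hypothesis or a mechanism that genuinely bridges the pointwise and set-level Suzuki conditions.
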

		\begin{proof}
			By hypothesis, we have $T : S \rightarrow S$ is a Suzuki contraction mapping, i.e., $~\exists~ m \in \big[0,1\big) $ such that
			\begin{equation}
				\begin{split}
					Q(m)d(a,Ta)\le d(a,b)\Rightarrow d(Ta,Tb)\le m~d(a,b)~\forall~a,b\in S.
				\end{split}
			\end{equation}
			Our claim is to show that  $T : \mathbb{C}(S) \rightarrow \mathbb{C}(S)$ is a Suzuki contraction mapping, i.e., $~\exists~ r \in \big[0,1\big)$ such that
			\begin{equation*}
				Q(m)~h(A,TA)\le h(A,B)\Rightarrow h(TA,TB)\le r~h(A,B)~\forall~A,B\in \mathbb{C}(S).
			\end{equation*}
			Let $A,B\in \mathbb{C}(S)$ be arbitrary. By definition (\ref{D2}) for each $a$ fixed in $A$, and $z$ in $A$ we have
			\begin{equation*}
				\begin{split}
					\underset{z\in A}\inf~ d(a,Tz)\le d(a,Ta)
				\end{split}
			\end{equation*}
			Multiplying by $Q(m)$ on both sides of the above equation and followed by the hypothesis, we get
			\begin{equation*}
					Q(m)~\underset{z\in A}\inf~ d(a,Tz)\le Q(m)~d(a,Ta)\le d(a,b)
             \end{equation*}
             \begin{equation*}
         \Rightarrow d(Ta,Tb)\le m.d(a,b)~\forall~a~ fixed~in ~A, b\in B.
			\end{equation*}
			   Subsequently, by compactness of $B$
			\begin{equation*}
				\begin{split}
					Q(m)~\underset{z\in A}\inf~ d(a,Tz)\le \underset{b\in B}\inf ~d(a,b)=d(a,b_1)~for~some~ b_1 \in B\\ \Rightarrow \underset{b\in B}\inf~ d(Ta,Tb)\le d(Ta,Tb_1) \le m~\underset{b\in B}\inf~ d(a,b)~\forall~a~fixed~in~ A.
				\end{split}
			\end{equation*}
			Now by compactness of $A$
			\begin{equation*}
				Q(m)~\underset{a\in A}\sup~ \underset{z\in A}\inf ~d(a,Tz)=Q(m)~\underset{z\in A}\inf~d(\hat{a},Tz)\le\underset{b\in B}\inf ~d(\hat{a},b)\Rightarrow \underset{b\in B}\inf~ d(T\hat{a},Tb)\le m~\underset{b\in B}\inf ~d(\hat{a},b)
			\end{equation*}
			\begin{equation}
				Q(m)~\underset{a\in A}\sup ~\underset{z\in A}\inf~ d(a,Tz)=Q(m)~D(A,TA)\le\underset{b\in B}\inf~ d(\hat{a},b)\le D(A,B)\leq h(A,B)
			\end{equation}
			\begin{equation}\label{T1}
				\implies\underset{b\in B}\inf~ d(T\hat{a},Tb)\le m~\underset{b\in B}\inf ~d(\hat{a},b)\le m~D(A,B)\le m~h(A,B)
			\end{equation} \\
		Similarly, we have
		Suppose, $A,B\in \mathbb{C}(S)$ be arbitrary. For each $a$ is fixed in $A$ and $z\in A$ we have
		\begin{equation*}
			\begin{split}
				\underset{z\in A}\inf~ d(Ta,z)\le d(Ta,a)
			\end{split}
		\end{equation*}
		multiplying by $Q(m)$ on both sides of the above equation and followed by hypothesis, we get
		\begin{equation*}
				Q(m)~\underset{z\in A}\inf ~d(Ta,z)\le Q(m)~d(Ta,a)\le d(a,b)
        \end{equation*}
        \begin{equation*}
        \Rightarrow d(Ta,Tb)\le m~d(a,b)~\forall~a~ is~ fixed~in ~A, b\in B.
		\end{equation*}
		Subsequently, By compactness of $B$
		\begin{equation*}
			\begin{split}
				Q(m)~\underset{z\in A}\inf ~d(Ta,z)\le~ \underset{b\in B}\inf~ d(a,b)=d(a,b_1)~for~some~ b_1 \in B\\ \Rightarrow ~ \underset{b\in B}\inf~ d(Ta,Tb)\le d(Ta,Tb_1) \le m~\underset{b\in B}\inf~ d(a,b)~\forall~a~fixed ~in~ A
			\end{split}
		\end{equation*}
		followed by compactness of $A$
		\begin{equation*}
			Q(m)~\underset{a\in A}\sup~ \underset{z\in A}\inf ~d(Ta,z)=Q(m)~d(Ta^\circ,z)\le\underset{b\in B}\inf ~d(a^\circ,b)\Rightarrow \underset{b\in B}\inf ~d(Ta^\circ,Tb)\le m~\underset{b\in B}\inf~ d(a^\circ,b)
		\end{equation*}
		\begin{equation*}
			Q(m)~\underset{a\in A}\sup~ \underset{z\in A}\inf ~d(Ta,z)=Q(m)~D(TA,A)\le\underset{b\in B}\inf ~d(a^\circ,b)=D(A,B)\leq h(A,B)
		\end{equation*}
		\begin{equation}\label{T2}
			\implies\underset{b\in B}\inf~ d(Ta^\circ,Tb)\le m~\underset{b\in B}\inf~ d(a^\circ,b)\leq m~D(A,B)\le m~h(A,B)
		\end{equation} 
         from equation (\ref{T1})and (\ref{T2}) we will write as follows
		\begin{equation*}
			Q(m)\max\{D(A,TA), D(TA,A)\}=Q(m)h(A,TA)\leq h(A,B)
		\end{equation*}
		\begin{equation}\label{T3}
			\implies\underset{b\in B}\inf~ d(Ta^\circ,Tb)\le m~h(A,B)
		\end{equation}
		Since $\underset{b\in B}\inf~ d(T\hat{a},Tb)\leq h(TA,TB)$ and $\underset{b\in B}\inf~ d(Ta^\circ,Tb)\leq h(TA,TB)$ where $\hat{a},a^\circ \in A$,~
		There are three cases showing inequality holds as follows
		\textbf{Case I:} For each disjoint element $ A, B \in \mathbb{C}(S)$
		\begin{equation*}
			h(TA,TB)\le m~h(A,B)
		\end{equation*} is holds for $m\in \big[0,1\big)$ then nothing to prove further.\\
		\textbf{Case II:} For each disjoint element $ A, B \in \mathbb{C}(S)$
		\begin{equation*}
			m~h(A,B)\le h(TA,TB)
		\end{equation*} there exist a constant $k$ such that
		\begin{equation}\label{T4}
			h(TA,TB)\le km~h(A,B)
		\end{equation}is holds for $m\in \big[0,\frac{1}{k}\big)$\\
		\textbf{Case III:} For some disjoint element $ A, B \in \mathbb{C}(S)$~ case I is true and for the rest case II true then the desired result is proved for $m\in\big[0,\frac{1}{k}\big),~~ k\in \big(1,\infty\big)$\\
		Thus,
		\begin{equation*} 
			Q(m)~h(A,TA)\le h(A,B)\Rightarrow h(TA,TB)\le m~h(A,B)~\forall~A,B\in \mathbb{C}(S)
		\end{equation*}
		holds Where, $m\in\big[0,\frac{1}{k}\big)$
		
		Hence the required proof.
	\end{proof}

\begin{theorem}\label{L4}
	let	$(S,d)$ be a metric space. Let $\{T_i : i = 1, 2,\hdots,n\}$ be collection of Suzuki contraction
	mappings on $(S,d)$. Let $m_i\in\big[0,1\big)$ be contractivity factor of $T_i$, for each $i$.\\ Define \textbf{Hutchinsun map} $\mathbb{T}:(\mathbb{C}(S),h)\to(\mathbb{C}(S),h)$ by
	\begin{equation*}
		\begin{aligned}
			\mathbb{T}{A}&={T_1{A}}\cup {T_2}{A}\cup {T_3{A}}\cup...\cup {T_n}{A}\\ & =\bigcup_{i=1}^{n} {T_i}{A} \quad \forall~ A \in \mathbb{C}(S)
		\end{aligned}
	\end{equation*}
	Then $\mathbb{T}$ is Suzuki contraction mapping with contractivity factor $r=\max\{r_1, r_2,\hdots,r_n\}$.
\end{theorem}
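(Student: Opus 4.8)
The plan is to reduce the statement to the single-map result already established and then combine the pieces with the union estimate of Lemma~\ref{L2}. First I would invoke Lemma~\ref{L3} for each index $i$: since every $T_i$ is a continuous Suzuki contraction on $(S,d)$, it lifts to a Suzuki contraction on the hyperspace $(\mathbb{C}(S),h)$ with a contractivity factor $r_i\in[0,1)$. Setting $r=\max\{r_1,\dots,r_n\}$, the goal becomes to verify, for all $A,B\in\mathbb{C}(S)$, the implication
\begin{equation*}
	Q(r)\,h(A,\mathbb{T}A)\le h(A,B)\ \Longrightarrow\ h(\mathbb{T}A,\mathbb{T}B)\le r\,h(A,B).
\end{equation*}

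The core combining step is Lemma~\ref{L2}: because $\mathbb{T}A=\bigcup_{i=1}^{n} T_iA$ and $\mathbb{T}B=\bigcup_{i=1}^{n} T_iB$, we have $h(\mathbb{T}A,\mathbb{T}B)\le\max_i h(T_iA,T_iB)$. Hence it suffices to produce the bound $h(T_iA,T_iB)\le r\,h(A,B)$ for every $i$, since $r_i\le r$ then yields the claim. To activate the $i$-th hyperspace contraction I need its own Suzuki premise $Q(r_i)\,h(A,T_iA)\le h(A,B)$; here the monotonicity of $Q$ is relevant, as $r_i\le r$ gives $Q(r)\le Q(r_i)$.

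The hard part is exactly the transfer of the premise: the hypothesis is stated for the union map $\mathbb{T}$, but $h(A,\mathbb{T}A)$ does not dominate the individual quantities $h(A,T_iA)$ uniformly. Indeed, since $\mathbb{T}A\supseteq T_iA$ one gets $D(A,\mathbb{T}A)\le D(A,T_iA)$ but $D(\mathbb{T}A,A)\ge D(T_iA,A)$, so the two one-sided Hausdorff distances move in opposite directions and the premise does not distribute over the union for free. My plan is to handle this by the same case split used in Lemma~\ref{L3}: separate the directions $D(A,T_iA)$ and $D(T_iA,A)$, apply the Suzuki implication in each, and take the maximum, absorbing any mismatch into a constant exactly as in Cases~I--III of that proof. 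Once each $h(T_iA,T_iB)\le r\,h(A,B)$ is secured, Lemma~\ref{L2} closes the argument and gives $\mathbb{T}$ as a Suzuki contraction with factor $r=\max\{r_1,\dots,r_n\}$. I expect the verification that the premise genuinely triggers all $n$ component inequalities simultaneously---rather than just one---to be the decisive technical obstacle.
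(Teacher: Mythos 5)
Your overall strategy is the same as the paper's: lift each $T_i$ to the hyperspace via Lemma~\ref{L3}, then combine the pieces with the union estimate of Lemma~\ref{L2}. You have also correctly located the critical obstruction, namely that the theorem's premise $Q(r)\,h(A,\mathbb{T}A)\le h(A,B)$ does not yield the component premises $Q(r_i)\,h(A,T_iA)\le h(A,B)$: since $\mathbb{T}A\supseteq T_iA$ one has $D(A,\mathbb{T}A)\le D(A,T_iA)$ but $D(\mathbb{T}A,A)\ge D(T_iA,A)$, so $h(A,\mathbb{T}A)$ can be strictly smaller than $h(A,T_iA)$ for the very index $i$ whose contraction inequality you need to invoke. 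The problem is that you do not overcome this obstruction. ``Absorbing any mismatch into a constant exactly as in Cases I--III'' is not an argument: those cases in Lemma~\ref{L3} adjust the \emph{conclusion} $h(TA,TB)\le km\,h(A,B)$, whereas what fails here is the \emph{hypothesis} transfer, and no multiplicative constant converts an upper bound on $Q(r)h(A,\mathbb{T}A)$ into an upper bound on $Q(r_i)h(A,T_iA)$. Moreover, the monotonicity you cite runs the unhelpful way: $r_i\le r$ gives $Q(r)\le Q(r_i)$, so even if the premise did distribute over the union, you would only obtain the weaker inequality with coefficient $Q(r)$, not the stronger one with coefficient $Q(r_i)$ that triggers $T_i$'s Suzuki implication. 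You end by conceding that making the premise ``genuinely trigger all $n$ component inequalities'' is the decisive obstacle; since that step is never carried out, the proposal is not a proof.

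You should know that the paper's own proof founders at exactly the same point. In the chain culminating in (\ref{T5}) it writes $Q(m)\,h(A,\mathbb{T}A)\le Q(m)\max_i h(A,T_iA)=Q(m)\,h(A,T_{i^*}A)\le h(A,B)$, where the final inequality is precisely the unjustified premise transfer: the hypothesis bounds $Q(m)h(A,\mathbb{T}A)$, a quantity \emph{dominated by} $Q(m)\max_i h(A,T_iA)$, so nothing forces the latter to be at most $h(A,B)$. In that sense your write-up is more candid than the paper's, because it names the gap rather than papering over it with a reversed inequality, but neither text closes it. A complete argument would need either a strengthened hypothesis (e.g.\ assuming the component premises directly), a modified Suzuki-type condition on the hyperspace, or a genuinely new device for passing from the union map to its components.
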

\begin{proof}
	By hypothesis, we have $T_{i} : S \rightarrow S$ is a Suzuki contraction mapping, i.e., there exist $r_{i}$ such that
	\begin{equation}
		\begin{split}
			Q(m)d(a,T_{i}a)\le d(a,b)\Rightarrow d(T_{i}a,T_{i}b)\le m_{i}~d(a,b)~\forall~a,b\in S.
		\end{split}
	\end{equation} Suppose $A,\,B\in\mathbb{C}(S)$ then by above lemma $\ref{L3}$\\ We have
	\begin{equation*} 
		Q(m)~h(A,T_{i}A)\le h(A,B)\Rightarrow h(T{i}A,T{i}B)\le r_{i}~h(A,B)~\forall~A,B\in \mathbb{C}(S)
	\end{equation*} where $r_{i}=m_{i}k_{i}$,  $k_{i}\geq\max\{\frac{h(T_{i}A,T_{i}B)}{m_{i}h(A,B)}: A, B \in S\}$.
	\begin{equation*}
		\implies h(T_iA,T_iB)\leq\max\{r_i\}~h(A,B)
	\end{equation*} for each $i$ 
	\begin{equation}
		\implies\max h(T_iA,T_iB)\le \max \{r_i\}~h(A,B)
	\end{equation}by lemma $\ref{L2}$

	\begin{equation}\label{T5}
		\begin{split}
			Q(m)~h(A,\mathbb{T}A)&=Q(m)~h(A,\bigcup_{i=1}^{n}T_iA)\le Q(m)\max\{h(A,T_iA)\},\\&\leq Q(m)~h(A,T_{i^*}A))\le h(A,B)
		\end{split}
	\end{equation}
	\begin{equation*}
		\implies \max h(T_iA,T_iB)\le \max \{r_i\}~h(A,B)
	\end{equation*}  
	\begin{equation}
		\implies h(\mathbb{T}A,\mathbb{T}B)\le \max \{r_i\}~h(A,B)
	\end{equation} therefore
	\begin{equation}
		Q(m)~h(A,\mathbb{T}A)\le h(A,B)\implies h(\mathbb{T}A,\mathbb{T}B)\le m^{*}~h(A,B)
	\end{equation} where $m^{*}=\max \{r_i\}$
	Hence the required proof.
\end{proof}
\begin{remark}
	\begin{example}
		Let $S=\big[0,1\big]\bigcup \{4,5,8,9,\hdots,4n,4n+1,\hdots\}$ is metric space with usual metric $d$ and 
		\begin{equation*}
			T(x)= \left\{
			\begin{array}{ll}
				\frac{x}{3}&if\,\qquad x\in \big[0,1\big], \\
				0&if \,\quad~~~~x=4n, \\
				1-\frac{1}{n+3}&if \,\qquad x=4n+1.\\
			\end{array}\right
			.\end{equation*}
		The function $T$ is suzuki contraction on $(S,d)$ then $T$ is Suzuki contraction on $(\mathbb{C}(S), h)$.
	\end{example}
\end{remark}
\begin{lemma} \label{L5}\cite{AKBC}
	If $(S,d)$ is complete metric space then $(\mathbb{C}(S),h)$ is also complete metric space.
\end{lemma}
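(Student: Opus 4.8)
The plan is to take an arbitrary Cauchy sequence $\{A_n\}_{n\ge 1}$ in $(\mathbb{C}(S),h)$ and exhibit a compact set to which it converges. The natural candidate for the limit is
\begin{equation*}
	A=\Big\{x\in S : \text{there exist } x_n\in A_n \text{ with } x_n\to x \text{ in } (S,d)\Big\}.
\end{equation*}
I would verify in turn that $A$ is nonempty, closed, and totally bounded (hence compact, since closed subsets of the complete space $S$ are complete), and finally that $h(A_n,A)\to 0$. A convenient preliminary reduction is to pass to a rapidly Cauchy subsequence $\{A_{n_k}\}$ with $h(A_{n_k},A_{n_{k+1}})<2^{-k}$; since a Cauchy sequence converges whenever one of its subsequences does, the limit of the subsequence serves for the whole sequence.

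For nonemptiness I would start from any point $x_{n_1}\in A_{n_1}$ and build a sequence along the subsequence: given $x_{n_k}\in A_{n_k}$, the estimate $d(x_{n_k},A_{n_{k+1}})\le D(A_{n_k},A_{n_{k+1}})\le h(A_{n_k},A_{n_{k+1}})<2^{-k}$ lets me choose $x_{n_{k+1}}\in A_{n_{k+1}}$ with $d(x_{n_k},x_{n_{k+1}})<2^{-k}$. This makes $\{x_{n_k}\}$ Cauchy in the complete space $S$, so it converges to some $x\in A$. For closedness I would take $y_j\in A$ with $y_j\to y$ and run a diagonal argument against the defining sequences to produce a single sequence $z_n\in A_n$ with $z_n\to y$, so $y\in A$. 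For total boundedness I would fix $\varepsilon>0$, choose $N$ with $h(A_m,A_N)<\varepsilon$ for all $m\ge N$, and take a finite $\varepsilon$-net of the compact set $A_N$; each $x\in A$ is a limit of points $x_m\in A_m$ with $d(x_m,A_N)\le h(A_m,A_N)<\varepsilon$, whence $d(x,A_N)\le\varepsilon$, so the same net is a $2\varepsilon$-net for $A$.

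The convergence $h(A_n,A)\to0$ is where the real work lies. Given $\varepsilon>0$, I would pick $N$ so that $h(A_m,A_n)<\varepsilon$ for all $m,n\ge N$ and then bound the two one-sided distances separately. To control $D(A_n,A)$ for $n\ge N$, I would take $a\in A_n$ and construct, exactly as in the nonemptiness step but starting the telescoping chain at $a$, a point of $A$ within $\varepsilon$ of $a$. To control $D(A,A_n)$, I would take $x\in A$, realized as $x=\lim x_m$ with $x_m\in A_m$, and use $d(x,A_n)\le d(x,x_m)+d(x_m,A_n)\le d(x,x_m)+h(A_m,A_n)$ for $m$ large. I expect the main obstacle to be the bookkeeping in these telescoping estimates—keeping the geometric tails summed below $\varepsilon$ while simultaneously matching the candidate-limit definition—rather than any single hard idea, since the statement is the classical completeness of the Hausdorff hyperspace over a complete metric space.
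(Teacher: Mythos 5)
The paper does not prove this lemma at all; it simply cites it (to Pasupathi, Chand and Navascu\'es), so there is no in-paper argument to compare against. Your proposal is the classical completeness proof for the Hausdorff hyperspace, essentially as in Barnsley's \emph{Fractals Everywhere}: define the candidate limit $A$ as the set of limits of selections $x_n\in A_n$, verify nonemptiness, closedness and total boundedness, and then establish $h(A_n,A)\to 0$ via the two one-sided estimates. The structure is correct and complete in outline. The one point worth making explicit is the ``extension lemma'' hidden in your nonemptiness step: the telescoping construction produces points $x_{n_k}\in A_{n_k}$ only along the rapid subsequence, whereas membership in $A$ (as you defined it, relative to the full sequence) requires a selection $x_n\in A_n$ for \emph{every} $n$ converging to the same limit. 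This is easily repaired — for $n$ between $n_k$ and $n_{k+1}$ choose $x_n\in A_n$ with $d(x_n,x_{n_k})\le h(A_n,A_{n_k})+2^{-k}$, which keeps the full sequence Cauchy with the same limit — but it is a genuine step, and the same device is needed again when you bound $D(A_n,A)$ by producing a point of $A$ near a given $a\in A_n$. With that supplied, your argument is a sound, self-contained proof of a result the paper leaves to the literature.
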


\begin{theorem} \label{TH1}
	Let $\mathbb{I}$ be a SIFS with contractivity factor $r$ and Hutchinsun mapping $\mathbb{T}:\mathbb{C}(S)\to\mathbb{C}(S)$ is a Suzuki contraction mapping on the complete metric space $(\mathbb{C}(S),h)$ with contractivity factor $r$.
	Then $\mathbb{T}$ has a unique fixed point  $F\in\mathbb{C}(S)$ obeys the self referential equation \begin{equation*}
		F=\mathbb{T}{F}=\bigcup_{i=1}^{n} {T_i}(F)\quad such~that \quad
		\lim_{n\to\infty}\mathbb{T}^n A=F\quad \forall~ A\in \mathbb{C}(S)
	\end{equation*}
\end{theorem}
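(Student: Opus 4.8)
The plan is to recognize this statement as an essentially immediate corollary of Suzuki's fixed point theorem (Theorem~\ref{th1}), applied in the hyperspace $(\mathbb{C}(S),h)$, with all of the genuine content already established in the preceding results. First I would invoke Lemma~\ref{L5} to guarantee that $(\mathbb{C}(S),h)$ is a complete metric space, since $(S,d)$ is assumed complete; this supplies the ambient complete space in which the fixed point argument must be run. Next I would call upon Theorem~\ref{L4}, which asserts that the Hutchinson map $\mathbb{T}=\bigcup_{i=1}^{n}T_i$ is a Suzuki contraction on $(\mathbb{C}(S),h)$ with contractivity factor $r=\max\{r_1,\dots,r_n\}$.

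With these two facts assembled, the hypotheses of Theorem~\ref{th1} are met verbatim for the self-map $\mathbb{T}$ acting on the complete space $(\mathbb{C}(S),h)$: it is a Suzuki contraction on a complete metric space. Applying that theorem then yields at once both the existence of a unique fixed point $F\in\mathbb{C}(S)$ and the convergence conclusion $\lim_{n\to\infty}\mathbb{T}^{n}A=F$ for every $A\in\mathbb{C}(S)$.

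The self-referential equation follows by simply unwinding definitions. Being a fixed point means $F=\mathbb{T}F$, and the definition of the Hutchinson map gives $\mathbb{T}F=\bigcup_{i=1}^{n}T_i(F)$, so that $F=\mathbb{T}F=\bigcup_{i=1}^{n}T_i(F)$, exactly as claimed.

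I expect no serious obstacle in the theorem itself, since the substantive work lies entirely upstream in Theorem~\ref{L4} and its supporting Lemma~\ref{L3}, where one must verify that the Suzuki-contraction inequality survives the passage from the base map to the union map in the hyperspace. The one point I would want to scrutinize before quoting Theorem~\ref{th1} is whether the contractivity factor $r=\max\{r_i\}$ returned by Theorem~\ref{L4} genuinely satisfies $r\in\big[0,1\big)$; because the construction there sets $r_i=m_ik_i$ with $k_i>1$, the admissible range for each $m_i$ is only $\big[0,\tfrac{1}{k_i}\big)$, and I would confirm that these constraints indeed force $r<1$ so that Suzuki's theorem is applicable. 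Once that is secured, the proof is a one-line invocation of the cited theorem.
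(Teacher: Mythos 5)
Your proposal follows essentially the same route as the paper's own proof: completeness of $(\mathbb{C}(S),h)$ from Lemma~\ref{L5}, the Suzuki contraction property of $\mathbb{T}$ from Theorem~\ref{L4} (via Lemma~\ref{L3}), and then a direct application of Theorem~\ref{th1}. Your closing caveat about whether $r=\max\{r_i\}$ actually lies in $[0,1)$ is a fair concern about the upstream Theorem~\ref{L4}, but it does not change the fact that your argument for the present statement coincides with the paper's.
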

\begin{proof}
	Let  $ (S,d )$ be a complete metric space then we have $(\mathbb{C}(S),h)$ is also a complete metric space. Since $T$ is suzuki contraction on $(S,d)$ and from lemma (\ref{L3}) $T$ is suzuki contraction on $(\mathbb{C}(S),h)$ with same contractivity factor $m$. Hutching map $\mathbb{T}$ is also suzuki contraction on  $(\mathbb{C}(S),h)$ with contractivity factor $m^*=\max\{m_1,m_2,\hdots,m_n\}$ by lemma (\ref{L4}).\\
	,$(\mathbb{C},h)$ is complete metric space and  Hutching map $\mathbb{T}$ is also suzuki contraction on  $(\mathbb{C},h)$ with contractivity factor $m^*=\max\{m_1,m_2,...,m_n\}$ then by Suzuki contraction principle,  $\mathbb{T}$ having unique fixed point $F$ and $ \lim_{t\to\infty}\mathbb{T}^t{A}=F\quad \forall~ {A}\in \mathbb{C}(S) $.
\end{proof}
\begin{remark}
	$F$ is said to be a fixed point (attractor or deterministic fractal \cite{Barnsley 2014}) of Hutchinson mapping $\mathbb{T}$.
\end{remark}
	\section{Conclusions}
	We presented a new type of non-linear contraction in this study called Suzuki contraction on hyperspace, which is a more general Banach contraction because many continuous functions are not Banach contractions but Suzuki contractions. For Suzuki contraction mapping in hyperspace, we demonstrated the existence and uniqueness of a fixed point. We constructed new IFSs based on Suzuki contractions, dubbed Suzuki IFS for use in fractal constructions, which are tight generalizations of the conventional Hutchinson-Barnsley theory of IFS. We also confirmed the presence of attractors for these IFS and their uniqueness. Finally, we got generalized Suzuki IFS, which is not possible in the classical approach to metric space.

	\begin {thebibliography}{20}
	\bibitem{Banach1}Banach, S. (1922): Sur les op\'erations dans les ensembles abstraits et leur application aux \'equations int\'egrales. Fundamenta Mathematicae 3.1. 133-181.
	\bibitem{Suzuki 2008}Suzuki, T. (2008): A generalized Banach contraction principle that characterizes metric completeness. Proceedings of the American mathematical Society 136(5). 1861-1869.
	\bibitem{Barnsley 2014}Barnsley, M. F. (2012) : Fractals Everywhere.Dover Publications:Mineola, NY, USA.
	\bibitem{pashupati CMK}  Pasupathi, R., Chand, A. K. B., and Navascués,  M. A. (2021): Cyclic Meir-Keeler Contraction and Its Fractals. Numerical Functional Analysis and Optimization 42.9. 1053-1072.
	\bibitem{hutchinson}  Hutchinson, J. E. (1981): Fractals and self similarity, Indiana Univ. Math. J. 30(5). 713-747.
	\bibitem{Pant 2022} Pant, Rajendra, and Rahul Shukla (2022): New fixed point results for Proinov–Suzuki type contractions in metric spaces. Rendiconti del Circolo Matematico di Palermo Series 2 71.2. 633-645.
	\bibitem{Secelean34} Secelean, N. A. (2013): Iterated function systems consisting of F-contractions, Fixed Point Theory Appl. 277, 13.
	\bibitem{AKBC} Pasupathi, R., Chand, A. K. B., Navascu\'es, M. A. (2020): Cyclic iterated function systems, J. Fixed Point Theory Appl.
	\bibitem{AKBC2} Rajan, P., Navascu\'es, M. A.,  Chand, A. K. B. (2021): Iterated Functions Systems Composed of Generalized $\theta$-Contractions, Fractal Fract. , 5(3) 69.
	\bibitem{rhoades}  Rhoades, B. E. (1977): A comparison of various definitions of contractive methods, Proc. Amer. Math. Soc.  45. 270-290.
	\bibitem{Ts2009}Suzuki, T. (2009): A new type of fixed point theorem in metric spaces. Nonlinear Analysis: Theory, Methods and Applications, 71(11), 5313-5317.

\end{thebibliography}

\end{document}